\documentclass[12pt,reqno,a4paper]{amsart}
\usepackage{blindtext}
\usepackage{fullpage}
\usepackage{mathtools}
\usepackage{longtable}
\usepackage{amsmath,amssymb,amsthm}
\usepackage{amscd}
\usepackage{bm}
\usepackage{hyperref}   
\usepackage{dsfont}
\usepackage{enumerate}
\usepackage{epsfig}
\usepackage{float, graphicx}
\usepackage{latexsym, amsxtra}
\usepackage{mathrsfs}
\usepackage{multicol}
\usepackage[normalem]{ulem}
\usepackage{psfrag}
\usepackage[parfill]{parskip}
\usepackage{stmaryrd}
\usepackage{tikz}
\usepackage[T1]{fontenc}
\usepackage{url}
\usepackage{verbatim}
\usepackage{indentfirst}
\usepackage{tikz-cd}
\usepackage{booktabs}
\usepackage{svg}

\usepackage{mathtools}

\usepackage{caption} \captionsetup[table]{skip=5pt}

\flushbottom

\makeatletter
\def\thm@space@setup{%
	\thm@preskip=2ex \thm@postskip=2ex
}
\makeatother

\oddsidemargin=0in
\evensidemargin=0in
\textwidth=6.5in
\setlength{\unitlength}{1cm}
\setlength{\parindent}{0.6cm}

\hypersetup{hidelinks}

\newtheorem{thm}{Theorem~}[section]
\newtheorem{lem}[thm]{Lemma~}

\newtheorem{prop}[thm]{Proposition~}

\newtheorem{conj}[thm]{Conjecture~}

\theoremstyle{remark}
\newtheorem{rmk}[thm]{Remark~}

\theoremstyle{definition}  
\newtheorem{defn}[thm]{Definition~}

\newcommand{\CC}{\mathbb{C}}
\newcommand{\ZZ}{\mathbb{Z}}

\newcommand{\LL}{\mathbb{L}}
\newcommand{\PP}{\mathbb{P}}

\newcommand{\HH}{\mathbb{H}}
\newcommand{\NN}{\mathbb{N}}

\newcommand{\F}{\mathscr{F}}

\newcommand{\V}{\mathcal{V}}

\title{The {N/D}-Conjecture for Nonresonant Hyperplane Arrangements}

\author[B. Xie]{Baiting Xie}
\address{Tsinghua University, China}
\email{xbt23@mails.tsinghua.edu.cn}

\author[C. Yu]{Chenglong Yu}
\address{Center for Mathematics and Interdisciplinary Sciences, Fudan University and
Shanghai Institute for Mathematics and Interdisciplinary Sciences (SIMIS), Shanghai, China}
\email{yuchenglong@simis.cn}
\date{}

\begin{document}
	\bibliographystyle{amsalpha}	
\begin{abstract}
This paper studies Bernstein--Sato polynomials $b_{f,0}$ for homogeneous polynomials $f$ of degree $d$ with $n$ variables. It is open to know when $-{n\over d}$ is a root of $b_{f,0}$. For essential indecomposable hyperplane arrangements, this is a conjecture by Budur, Musta\c{t}\u{a} and Teitler and implies the strong topological monodromy conjecture for arrangements. Walther gave a sufficient condition that a certain differential form does not vanish in the top cohomology group of Milnor fiber. We use Walther's result to verify the $n\over d$-conjecture for weighted hyperplane arrangements satisfying the nonresonant condition.
\end{abstract}
	
	\maketitle
     \setcounter{tocdepth}{1}

	\tableofcontents

	\section{Introduction}\label{sec: intro}
	
	We first recall the definition of Bernstein--Sato polynomials. Let $ R = \CC\{x_{1},\cdots,x_{n}\} $ be the convergent power series ring, and $ D = R\langle \partial_{1},\cdots ,\partial_{n} \rangle $ be the Weyl algebra. Let $ s $ be a formal variable that commutes with all the $ x_{i},\partial_{i} $.  Let $ f \in \CC[x_{1},\cdots,x_{n}]-\CC $ be a non-constant polynomial satisfying that $ f(0) = 0 $. Recall that $ D[s] $ can act on $ R_{f}[s]  f^{s} $ as follows:
	\begin{equation*}
		\partial_{i}\left(\frac{g}{f^{j}}\cdot f^{s}\right) = \partial_{i}\left(\frac{g}{f^{j}}\right)\cdot f^{s} + s\partial_{i}f\cdot\frac{g}{f^{j+1}}\cdot f^{s}.
	\end{equation*}
	
	\begin{defn}\label{def: b-function}
		The local Bernstein--Sato polynomial of $ f $ at $ 0 $ is defined to be the monic polynomial $ b_{f,0}(s) $ of the smallest degree such that there exists some $ P(s) \in D[s] $ satisfying that
		\begin{equation*}
			P(s)  f^{s+1} = b_{f,0}(s)  f^{s}.
		\end{equation*}
	\end{defn}
	
	The roots of the Bernstein--Sato polynomial are of particular interest in algebraic geometry. In \cite{malgrange1975polynome}, Malgrange proved that they are all negative rational numbers when $ f $ has an isolated singularity at $ 0 $, and later Kashiwara proved the same conclusion holds in the general case in \cite{kashiwara1976b}. After that, Malgrange and Kashiwara independently established the relationship between these roots and the eigenvalues of the monodromy action on the Milnor fiber defined by $ f $, see \cite{malgrange1983polynomes} and \cite{kashiwara1983vanishing}. In \cite{denef1992caracteristiques}, Denef and Loeser conjectured that the poles of the local topological zeta function of $ f $ are always the roots of $ b_{f,0}(s) $, which is known as the strong topological monodromy conjecture. This mysterious problem still remains widely open in general. However, in the case where $ f $ is a hyperplane arrangement, some progress has been made on this conjecture.
    
	Now we review some basic notation about hyperplane arrangements. Recall that a (reduced) central hyperplane arrangement $ A $ in $ \CC^{n} $ is a finite collection of pairwisely non-collinear functionals $ \{L_{1},\cdots,L_{r}\} \subset (\CC^{n})^{*} $. We also call homogeneous polynomials of the form $ \prod\limits_{j=1}^{r} L_{j}^{a_{j}} (a_{j} \in \ZZ_{>0}) $ central hyperplane arrangements.

	We call a subset $ B $ of a linear space $ V $ indecomposable if for any nontrivial decomposition $ V = W_{1} \oplus W_{2} $ we all have $ B \nsubseteq W_{1} \cup W_{2} $. A central hyperplane arrangement $ A $ (or $ f $) is called indecomposable if it is indecomposable as a subset of $ (\CC^{n})^{*}$. In other words, the polynomial $ f $ does not admit a factorization
	\begin{equation*}
		f(x_{1},\cdots,x_{n}) = g_{1}(x_{1},\cdots,x_{i})g_{2}(x_{i+1},\cdots,x_{n}),\ 1 \leq i \leq n-1,
	\end{equation*}
	under any linear transformation of coordinates. Note that such types of hyperplane arrangements are called essential and indecomposable in \cite{budur2011monodromy}.

The $ \frac{n}{d} $-conjecture by Budur, Musta\c{t}\u{a} and Teitler is stated as follows: 
	\begin{conj}\label{conj: n over d}
		Let $ f $ be an indecomposable central hyperplane arrangement of degree $ d $ in $ \CC^{n} $. Then $ -\frac{n}{d} $ is a root of the Bernstein--Sato polynomial $ b_{f,0}(s) $.
	\end{conj}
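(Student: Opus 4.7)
My plan is to reduce the conjecture to a cohomological non-vanishing statement via a criterion of U.~Walther, and then verify this statement under a nonresonance hypothesis on the weights. The starting point is Walther's sufficient condition: if the class of the top form $\omega_f := dx/f$ is nonzero in the $e^{-2\pi i n/d}$-eigenspace of the monodromy action on $H^n(F,\CC)$, where $F = f^{-1}(1)$ is the Milnor fiber, then $-n/d$ is a root of $b_{f,0}$. Combined with the Malgrange--Kashiwara correspondence between $b$-function roots and monodromy eigenvalues, this turns Conjecture~\ref{conj: n over d} into a concrete differential-form question.

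Next I would pass to the projective picture. Writing $U := \PP^{n-1}\setminus \PP(A)$ and $\zeta := e^{-2\pi i n/d}$, the cyclic cover description of the Milnor fibration gives an identification
\[
H^n(F,\CC)_\zeta \;\cong\; H^{n-1}(U, \La_\zeta),
\]
where $\La_\zeta$ is the rank-one local system on $U$ whose local monodromy around the $j$-th hyperplane is $\zeta^{a_j}$. Under a suitable nonresonance hypothesis on the exponents $\{a_j/d\}$ along the boundary of a good compactification of $U$, the Esnault--Schechtman--Viehweg vanishing theorem identifies this cohomology group with the top piece of the logarithmic de Rham complex $(\Omega^\bullet_{\overline U}(\log \partial U), \nabla_\zeta)$, equivalently with the top piece of the Aomoto/Orlik--Solomon complex.

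The third and most delicate step is to identify the class of $\omega_f$ inside this logarithmic model and to prove that it is nonzero. Under the ESV identification, $\omega_f$ corresponds to an explicit top form built from $d\log L_1, \ldots, d\log L_r$ weighted by the exponents $a_j/d$. I would then show that, under nonresonance, no combinatorial cancellation in the Orlik--Solomon algebra can kill this class; for this, two natural routes are either (i) the Schechtman--Varchenko $\beta$nbc-basis argument, combined with indecomposability of $A$ to forbid a degenerate expansion, or (ii) a direct pairing with an explicit twisted cycle in $H_{n-1}(U, \La_\zeta^{\vee})$ whose period against $\omega_f$ is a nonzero hypergeometric number.

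The step I expect to be the main obstacle is precisely this last non-vanishing check: showing that the particular top form representing $\omega_f$ is not a coboundary in the twisted logarithmic complex. The nonresonance assumption is designed to make the complex concentrated in top degree, but translating ``indecomposable arrangement plus nonresonant weights'' into the combinatorial statement ``this specific class survives all relations'' is where the real work lies. Once that is settled, the conjecture in the nonresonant case follows by chaining the ESV identification, the Milnor-fiber eigenspace decomposition, and Walther's criterion.
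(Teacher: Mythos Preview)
Your overall architecture matches the paper's: reduce to Walther's cohomological criterion, pass to the rank-one local system on $U=\PP^{n-1}\setminus D$, and use a nonresonance hypothesis together with a log-de-Rham/vanishing argument on a good compactification. (Two small corrections: the relevant Milnor-fiber cohomology sits in degree $n-1$, not $n$; and neither your plan nor the paper proves the full conjecture---both yield only the nonresonant case, which in the paper is Theorem~\ref{thm: main}.)

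Where you and the paper diverge is precisely at the step you flag as the main obstacle. You propose to show that the class of $\omega_0$ survives in the Aomoto/Orlik--Solomon model by either a $\beta$nbc-basis argument or a hypergeometric period computation. The paper avoids this combinatorial analysis entirely. Its key idea is a numerical lemma (Lemma~\ref{lem: combinatoric translation for indecomposable}): indecomposability of $A$ produces positive rationals $\epsilon_1,\dots,\epsilon_r$ with $\sum_j \epsilon_j=n$ and $\sum_{L_j\in W}\epsilon_j<\dim W$ for every proper dense edge $W$. These $\epsilon_j$ are used to pick a specific integral shift $\mu$ and hence a specific regular extension $(\mathcal V_\mu,\nabla_\mu)$ on the wonderful blow-up $Y$. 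With this choice, the line bundle $\mathcal V_\mu^{-1}(-\pi^{-1}D)$ satisfies the numerical hypotheses of the Esnault--Viehweg vanishing theorem (Lemma~\ref{lem: vanishing}), so $H^p(Y,\Omega^q_Y(\log\pi^{-1}D)\otimes\mathcal V_\mu)=0$ for $p+q\neq n-1$. The spectral sequence then forces the edge map
\[
H^0\bigl(Y,\Omega^{n-1}_Y(\log\pi^{-1}D)\otimes\mathcal V_\mu\bigr)\hookrightarrow \HH^{n-1}\cong H^{n-1}(U,\LL)
\]
to be injective, and $\omega_0$ is visibly a nonzero global section (it even lies in the smaller space $H^0(\PP^{n-1},\Omega^{n-1}\otimes\mathcal O(n))$). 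So nonvanishing is automatic---no Orlik--Solomon relations or period integrals are ever examined.

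In short: your route tries to prove nonvanishing inside a fixed model by combinatorics or periods; the paper instead exploits the freedom in choosing the Deligne-type extension, feeding indecomposability in through Lemma~\ref{lem: combinatoric translation for indecomposable} to manufacture a situation where Esnault--Viehweg vanishing makes the nonvanishing trivial. Your approach is plausible but you have not indicated how indecomposability would enter the $\beta$nbc or period computation, and that is exactly the content that Lemma~\ref{lem: combinatoric translation for indecomposable} supplies in the paper's argument.
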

	
	By \cite{budur2011monodromy}, this conjecture is sufficient for the strong topological monodromy conjecture of hyperplane arrangements. 
	
	Conjecture \ref{conj: n over d} has been verified in low dimensions. The case where $ n \leq 3 $ and $ f $ is reduced was proved in \cite{budur2011local} and \cite{saito2016bernstein}. In \cite{bath2020combinatorially}, Bath proved that Conjecture \ref{conj: n over d} holds for tame arrangements, hence settling it for $ n \leq 3 $. 
	
	The problem becomes more challenging in higher dimensions. One possible approach is to characterize, for a fixed underlying reduced arrangement $ \{L_{1},\cdots,L_{r}\} $, the exponents $ a_{j} \in \ZZ_{>0} $ for which the Conjecture \ref{conj: n over d} holds for $ f = \prod\limits_{j=1}^{r} L_{j}^{a_{j}} $. A classical result from \cite{budur2011local} is the following. 
	\begin{thm}[{\cite[Theorem 2(1)]{budur2011local}}]\label{thm: log threshold}
		Let $ f = \prod\limits_{j=1}^{r} L_{j}^{a_{j}} (a_{j} \in \ZZ_{>0} ) $ be an indecomposable central hyperplane arrangement of degree $ d $ in $ \CC^{n} $. Denote $ \{ L_{1},\cdots,L_{r} \} $ by $ A $ and define $ \mathcal{L} $ to be the set of nonzero proper linear subspaces $ W $ in $ (\CC^{n})^{*} $ such that $ A \cap W $ is indecomposable in $ W $. Suppose that
		\begin{equation*}
			\text{For all } W\in\mathcal{L},\  -\frac{n}{d}(\sum\limits_{L_{j} \in W}a_{j}) + \dim W \geq 0.
		\end{equation*}
		Then Conjecture \ref{conj: n over d} holds for $ f $.
	\end{thm}

Recently, Shi and Zuo proved that Conjecture \ref{conj: n over d} holds for hyperplane arrangements with generic multiplicities.
	\begin{thm}[{\cite[Theorem 1.7]{shi2024variation}}]\label{thm: generic multiplicity}
	Let $ A = \{L_{1},\cdots,L_{r}\} $ be an indecomposable reduced central hyperplane arrangement in $ \CC^{n} $. Then there exists a proper analytic subset $ S $ of $ \{(\alpha_{1},\cdots,\alpha_{r}) \in \CC^{r} \mid \alpha_{1}+\cdots+\alpha_{r} = 1\} $ such that for any $ (a_{1},\cdots,a_{r}) \in \ZZ_{>0}^{r} $, if $ (\frac{a_{1}}{a_{1}+\cdots+a_{r}},\cdots,\frac{a_{r}}{a_{1}+\cdots+a_{r}}) \notin S $, then Conjecture \ref{conj: n over d} holds for $ f $.	
\end{thm}
	
In this paper, we prove the Conjecture \ref{conj: n over d} under certain nonresonant conditions. The framework of our proof follows the approach of computing cohomology via the algebraic de Rham theorem, as developed in \cite{budur2011local} and \cite{saito2016bernstein}. 
	
	\begin{thm}\label{thm: main}
		Let $ f = \prod\limits_{j=1}^{r} L_{j}^{a_{j}} (a_{j} \in \ZZ_{>0} ) $ be an indecomposable central hyperplane arrangement of degree $ d $ in $ \CC^{n} $. Denote $ \{ L_{1},\cdots,L_{r} \} $ by $ A $ and define $ \mathcal{L} $ to be the set of nonzero proper linear subspaces $ W $ in $ (\CC^{n})^{*} $ such that $ A \cap W $ is indecomposable in $ W $. Suppose that
		\begin{equation}\label{eqn: nonresonant}
			\text{For all } W\in\mathcal{L},\ -\frac{n}{d}(\sum\limits_{L_{j} \in W}a_{j}) + \dim W \notin \ZZ_{>0}. \tag{R}
		\end{equation}
		Then Conjecture \ref{conj: n over d} holds for $ f $.
	\end{thm}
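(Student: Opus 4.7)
The plan is to verify the cohomological sufficient condition of Theorem \ref{thm: cohomological sufficient condition}, i.e., to show that $[\omega_0|_U] \neq 0$ in $H^{n-1}(U,\LL)$. The strategy is to use hypothesis (R) to reduce this twisted de Rham cohomology to a combinatorial model --- the Aomoto complex on the Orlik--Solomon algebra --- and then to identify $\omega_0$ as a nonzero class in that model.

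First I would match (R) to a standard nonresonance condition. Under the perp correspondence $W \leftrightarrow W^{\perp} \subset \CC^n$, the subspaces in $\mathcal{L}$ correspond precisely to the dense edges of $A$: since $A$ is assumed to span, indecomposability forces $A\cap W$ to span $W$, and indecomposability of $A\cap W$ inside $W$ is exactly denseness of the edge $W^{\perp}$. Setting $\alpha_j = -\tfrac{n a_j}{d}$, condition (R) reads $\sum_{L_j \supset W^{\perp}} \alpha_j + \dim W \notin \ZZ_{>0}$ for every dense edge, which is the Esnault--Schechtman--Viehweg nonresonance hypothesis. Under this assumption, classical results (Esnault--Schechtman--Viehweg, Schechtman--Terao--Varchenko) identify $H^{\bullet}(U,\LL)$ with the cohomology of the Aomoto complex on the projective Orlik--Solomon algebra, whose differential is wedging with $\omega = \sum_j \alpha_j \, d\log L_j$.

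Next I would identify the image of $\omega_0 \otimes f^{-n/d}$ in this combinatorial model and verify it is a nonzero top-degree class. The form $\omega_0$ is (up to the scalar $1/d$) a nowhere-vanishing trivialization of $\Omega^{n-1}_{\PP^{n-1}}(n) \cong \sO_{\PP^{n-1}}$; via the log-comparison it corresponds to a distinguished Aomoto form. To certify non-coboundariness, one can pair against a specific twisted $(n-1)$-cycle --- e.g., the twisted chain associated to a bounded chamber of a real perturbation of $A$, or more invariantly a Leray tube around a generic flag --- whose associated hypergeometric integral is nonzero precisely when (R) holds. Alternatively, one can argue algebraically in the Orlik--Solomon algebra by induction on $n$, using the indecomposability of $f$ to propagate nonvanishing from dense subspaces of lower dimension (where the inductive hypothesis applies) up to the full arrangement.

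The main obstacle I anticipate is this last step: tracking $\omega_0$ precisely through the ESV quasi-isomorphism and confirming that it does not lie in the image of the Aomoto differential. The passage to the Orlik--Solomon complex is essentially a citation once (R) is translated into the standard form, but the specific Euler-type form $\omega_0$ requires explicit bookkeeping, and the nonvanishing argument must genuinely invoke both the indecomposability of $f$ and the positivity clause in (R) --- indeed, without indecomposability one could factor $f = g_1 g_2$ and write $\omega_0$ as a coboundary along the product structure, so the global combinatorial hypothesis is essential to the algebraic conclusion.
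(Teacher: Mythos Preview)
Your opening move---reducing to Walther's criterion $[\omega_0]\neq 0$ in $H^{n-1}(U,\LL)$---matches the paper, but thereafter the proposal has a substantive gap. The crux is that $\omega_0$ is \emph{not} a logarithmic form: in any affine chart it is a constant multiple of $dx_1\wedge\cdots\wedge dx_{n-1}$, so it has no direct incarnation in the Orlik--Solomon/Aomoto complex. Passing it through an ESV/STV quasi-isomorphism only tells you it is cohomologous to \emph{some} log form; identifying that form and checking it is not an Aomoto coboundary is the whole problem, and neither of your suggestions settles it. The twisted-cycle pairing presupposes a real structure and a nonvanishing hypergeometric integral that you assert but do not establish; the ``inductive Orlik--Solomon argument'' is a gesture, not an argument. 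Note also that since every $\alpha_j=-na_j/d$ is negative, the \emph{standard} STV condition $\sum_{L_j\in W}\alpha_j\notin\ZZ_{>0}$ is automatic---so whatever work (R) does, it is not merely licensing the Aomoto comparison, and your sketch never isolates where (R) genuinely enters.

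The paper's route is entirely different and avoids any combinatorial model for $\omega_0$. It works on the wonderful resolution $\pi\colon Y\to\PP^{n-1}$ and exploits indecomposability through Lemma~\ref{lem: combinatoric translation for indecomposable}: there exist rationals $\epsilon_j>0$ with $\sum_j\epsilon_j=n$ and $\sum_{L_j\in W}\epsilon_j<\dim W$ for every proper $W$. Setting $\mu(W)=1+\lfloor\sum_{L_j\in W}\epsilon_j\rfloor$ gives $1\le\mu(W)\le\dim W$, and one takes the extension $\mathcal{V}_\mu=\pi^*O_{\PP^{n-1}}(n)\otimes O_Y(-\sum_W\mu(W)E_W)$. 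Condition (R) is used exactly here, to guarantee that the residues $\mu(W)-\tfrac{n}{d}\sum_{L_j\in W}a_j$ are not positive integers, so Deligne's comparison gives $\HH^{n-1}(Y,\Omega^\bullet_Y(\log\pi^{-1}D)\otimes\mathcal{V}_\mu)\cong H^{n-1}(U,\LL)$. The numerics of $\mu$ then feed into the Esnault--Viehweg vanishing theorem (Lemma~\ref{lem: vanishing}), forcing $H^p(Y,\Omega^q_Y(\log\pi^{-1}D)\otimes\mathcal{V}_\mu)=0$ for $p+q\neq n-1$; hence the edge map $H^0(Y,\Omega^{n-1}_Y(\log\pi^{-1}D)\otimes\mathcal{V}_\mu)\hookrightarrow\HH^{n-1}$ is injective. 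Finally $\mu(W)\le\dim W$ ensures that the holomorphic form $\omega_0\in H^0(\PP^{n-1},\Omega^{n-1}_{\PP^{n-1}}(n))$ lands in $H^0(Y,\Omega^{n-1}_Y(\log\pi^{-1}D)\otimes\mathcal{V}_\mu)$ as a nonzero section, so its class is nonzero. The nonvanishing thus comes from Hodge-theoretic degeneration on a carefully engineered extension---the missing idea in your proposal is precisely this choice of $\mu$, which simultaneously encodes indecomposability and makes both Deligne's comparison and Esnault--Viehweg vanishing fire.
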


\begin{rmk}
        The set $ \mathcal{L} $ corresponds to the set of all dense edges of $ A $. The condition \eqref{eqn: nonresonant} is used in \cite{schechtman1995local} to prove vanishing results about the cohomology of rank-one local systems on complements of hyperplane arrangements. A similar condition also appears in \cite{esnault1992cohomology}. Assuming the nonvanishing of certain cohomology classes, Budur, Saito and Yuzvinsky proved Conjecture \ref{conj: n over d} under a similar nonresonant condition
        , see \cite[Theorem 2.5]{budur2011local}. Our result removes the assumption on nonzero cohomology classes.
    \end{rmk}
	
	Note that Theorem \ref{thm: main} gives an explicit combinatorial condition in terms of the intersection lattice of hyperplane arrangements. This condition is of the opposite form to that in Theorem \ref{thm: log threshold}. Furthermore, by taking $ S $ to be the zero set of
	\begin{equation*}
		\prod\limits_{W \in \mathcal{L}}\prod\limits_{k=1}^{\dim W - 1}(\sum\limits_{L_{j} \in W}\alpha_{j}-\frac{k}{n}),
	\end{equation*}
	we obtain from Theorem \ref{thm: main} an explicit construction of the analytic subset in Theorem \ref{thm: generic multiplicity}, which was not provided in \cite{shi2024variation}.
    
    The structure of the paper is as follows. In \S\ref{sec: preliminaries}, we recall Walther's criterion (Lemma \ref{lem: cohomological sufficient condition}) and reduce the problem to the computation of the cohomology of a certain local system on the hyperplane arrangement complement. In \S\ref{sec: connection} we construct extensions of certain connections and calculate their residues. In \S\ref{sec: proof 2} we prove the main theorem (Theorem \ref{thm: main}).

    \noindent {\it Acknowledgments:} The second author is supported by the national key research and development program of China (No. 2022YFA1007100) and NSFC 12201337. We would like to thank Nero Budur for the wonderful lectures on singularities at YMSC, Tsinghua. We are also grateful for him pointing out \cite{budur2011local} to us. We also thank Quan Shi, Stephen Shing-Toung Yau, Huaiqing Zuo for organising the course and sharing notes.

	\section{Preliminaries}\label{sec: preliminaries}

In this section, we review the known connections between Bernstein--Sato polynomials, Milnor fibers and hypersurface complements. We always assume that $ f \in \CC[x_{1},\cdots,x_{n}] $ is a homogeneous polynomial of degree $ d >0 $.

In \cite{walther2005bernstein, walther2017jacobian}, Walther established the relationships between the local Bernstein--Sato polynomial $ b_{f,0}(s) $ and the Milnor fiber $ F = f^{-1}(1) $ of $ f $ at $ 0 $. In this paper, we will use the following sufficient cohomological condition to detect when $ -\frac{n}{d} $ is a root of $ b_{f,0}(s) $.

\begin{lem}[{\cite[Theorem 4.12]{walther2005bernstein}}]
	\label{lem: cohomological sufficient condition}
	Let $ f \in \CC[x_{1},\cdots,x_{n}] $ be a homogeneous polynomial of degree $ d $. Let $ F = f^{-1}(1) $ be the Milnor fiber of $ f $ at $ 0 $. Set
	\begin{equation*}
		\omega_{0} = \frac{1}{d}\sum\limits_{i=1}^{n} (-1)^{i-1} x_{i}dx_{1}\wedge \cdots \wedge \widehat{dx_{i}} \wedge \cdots \wedge dx_{n}.
	\end{equation*} 
	If the restriction of $ \omega_{0} $ on $ F $ defines a nonzero cohomological class in $ H^{n-1}(F,\CC) $, then $ -\frac{n}{d} $ is a root of the Bernstein--Sato polynomial $ b_{f,0}(s) $.
\end{lem}

We now calculate the cohomology of the Milnor fiber $ F $ of a homogeneous polynomial $ f $ by introducing a canonical $ \ZZ/d\ZZ $-action. For more details one can see \cite{cohen1995milnor} for reference.

Recall that $ F = f^{-1}(1) = \{(x_{1},\cdots,x_{n}) \in \CC^{n} \mid f(x_{1},\cdots,x_{n}) = 1 \} $ is the Milnor fiber of $ f $. Let  $ \zeta = e^{\frac{2\pi i}{d}} $ be the $ d $-th root of unity. Consider the following automorphism on $ F $:
\begin{equation*}
	\rho \colon F \longrightarrow F,\ (x_{1},\cdots,x_{n}) \mapsto (\zeta x_{1},\cdots,\zeta x_{n}).
\end{equation*}

Let $ D = \{f=0\} $ be the zero set of $ f $ in $ \PP^{n-1} $ and $ U = \PP^{n-1} \setminus D $ be its complement. The automorphism $ \rho $ induces a cyclic cover of degree $ d $:
\begin{equation*}
	p \colon F \longrightarrow U,\ (x_{1},\cdots,x_{n}) \mapsto [x_{1}:\cdots:x_{n}].
\end{equation*}

Since $ p \circ \rho = p $, the pull-back $ \rho^{*} \colon O_{F} \rightarrow \rho_{*}O_{F} $ induces an automorphism $ p_{*}O_{F} \rightarrow p_{*}O_{F} $, which we still denote by $ \rho^{*} $. Since $ \rho^{*} \colon p_{*}O_{F} \rightarrow p_{*}O_{F} $ commutes with the natural differential $ d_{F} $ on $ p_{*}O_{F} $, by the Riemann-Hilbert correspondence, it induces an automorphism $ \rho_{0} $ on the local system defined by the flat connection $ (p_{*}O_{F},d_{F}) $ on $ U $, which is exactly $ p_{*}\underline{\CC}_{F} $. Since $ p $ is a covering map and $ \rho_{0} $ is induced by $ \rho^{*} $, we have the following commutative diagram:
\begin{equation*}
	\begin{tikzcd}
		H^{n-1}(U,p_{*}\underline{\CC}_{F}) \arrow{d}{\rho_{0*}} \arrow{r}{\simeq} & H^{n-1}(F,\CC) \arrow{d}{\rho^{*}}  \\
		H^{n-1}(U,p_{*}\underline{\CC}_{F})	\arrow{r}{\simeq}	   & H^{n-1}(F,\CC) 
	\end{tikzcd}
\end{equation*}

Let $ \LL $ be the $ \zeta^{n} $-eigensheaf of $ \rho_{0} $ in $ p_{*}\underline{\CC}_{F} $. The above commutative diagram implies that $  H^{n-1}(U,\LL) \subseteq H^{n-1}(U,p_{*}\underline{\CC}_{F}) $ is exactly the $ \zeta^{n} $-eigenspace of $ \rho^{*} $ in $ H^{n-1}(F,\CC) $ under the identification $ H^{n-1}(U,p_{*}\underline{\CC}_{F}) \simeq H^{n-1}(F,\CC) $. In particular, since $ \rho^{*}\omega_{0} = \zeta^{n}\omega_{0} $, the cohomology class $ [\omega_{0}] $ is contained in $  H^{n-1}(U,\LL) $.

Let $ (\mathcal{V},\nabla) $ be the flat connection corresponding to $ \LL $ under the Riemann-Hilbert correspondence. Since $ \rho^{*} $ commutes with $ d_{F} $, we have that $ \mathcal{V} = O_{U}(n) $ is exactly the $ \zeta^{n} $-eigensheaf of $ \rho^{*} $ in $ p_{*}O_{F} $ and $ \nabla $ is the restriction of $ d_{F} $ on $ \mathcal{V} $. Furthermore, for any hyperplane $ H $ in $ \PP^{n-1} $ and its defining polynomial $ L $, since $ f_{L} = \frac{f}{L^{d}} $ defines a holomorphic function on $ U \setminus H $, we can identify $ \mathcal{V}|_{U \setminus H} $ with $ O_{U \setminus H}\cdot L^{n} $ and explicitly write the restriction of $ \nabla $ on $ U \setminus H $ as
\begin{equation}\label{eqn: connection on U}
	\nabla|_{U \setminus H} \colon  O_{U \setminus H}\cdot L^{n} \longrightarrow \Omega^{1}_{U \setminus H} \cdot L^{n},\ g \cdot L^{n} \mapsto(dg -  g \cdot \frac{n}{d}\cdot\frac{df_{L}}{f_{L}}) \cdot L^{n}.
\end{equation}

Since $ U $ is stein, the cohomology of $ \LL $ is exactly the cohomology of the cochain
\begin{equation*}
	0 \longrightarrow	H^{0}(U,\mathcal{V}) \longrightarrow H^{0}(U,\Omega^{1}_{U} \otimes \mathcal{V}) \longrightarrow \cdots \longrightarrow H^{0}(U,\Omega^{n-1}_{U} \otimes \mathcal{V}) \longrightarrow 0, 
\end{equation*}
where the differentials are given by $ \nabla $. Note that $ \omega_{0} $ can also be viewed as a global section of $ \Omega^{n-1}_{\PP^{n-1}} \otimes \V $ on $ \PP^{n-1} $. So the cohomology class $ [\omega_{0}] $ mentioned above is exactly the image of $ \omega_{0} $ under the following restriction map:
	\begin{equation*}
	H^{0}(\PP^{n-1},\Omega^{n-1}_{\PP^{n-1}} \otimes \V ) \longrightarrow H^{0}(U,\Omega^{n-1}_{U} \otimes \mathcal{V}) \longrightarrow H^{n-1}(U,\LL).
\end{equation*}

	\section{The extensions of connections}\label{sec: connection}

	Assume that $ f = \prod\limits_{j=1}^{r} L_{j}^{a_{j}} (a_{j} \in \ZZ_{>0}) $ is a central hyperplane arrangement of degree $ d $ in $ \CC^{n} $. Keep the notation from Section \ref{sec: preliminaries}, in this section we calculate $ H^{p}(U,\LL) $ via the hypercohomology of the logarithmic de Rham complexes of certain extensions. Our purpose is to prove Proposition \ref{prop: algebraic de rham}.

	Recall that $ D = \{f=0\} $ is the zero set of $ f $ in $ \PP^{n-1} $ and $ U = \PP^{n-1} \setminus D $ is its complement. We first review the construction in \cite{schechtman1995local} of a log resolution $ \pi \colon Y \rightarrow \PP^{n-1} $ of $ (\PP^{n-1},D) $.
	
	Recall that $ A = \{L_{1},\cdots,L_{r}\} $ is the reduced hyperplane arrangement defined by $ f $. For any $ 1 \leq k \leq n-1 $, define $ \mathcal{L}_{k} $ to be the set of linear subspaces $ W \subsetneq (\CC^{n})^{*} $ satisfying that $ \dim W = k $ and $A \cap W $ is indecomposable in $ W $.
	
	For each $ 2 \leq k \leq n-1 $, $ \mathcal{L}_{k} $ defines a reduced subvariety in $ \PP^{n-1} $ of codimension $ k $, which we denote by $ Z_{k} $. Consider the sequence
	\begin{equation*}
	 \begin{tikzcd}
	 		Y = Y_{1} \arrow{r}{\tau_{2}} & Y_{2}  \arrow{r}{\tau_{3}} & \cdots  \arrow{r}{\tau_{n-2}} & Y_{n-2}  \arrow{r}{\tau_{n-1}} & Y_{n-1} = \PP^{n-1},
	 \end{tikzcd}
	\end{equation*}
	where $ \tau_{k} \colon Y_{k-1} \rightarrow Y_{k} $ is the blowing-up along the proper transform of $ Z_{k} $ under $ \tau_{k+1} \circ \cdots \circ \tau_{n-1} $.
	
	\begin{lem}\label{lem: STV-blowup}\rm{(=\cite[Theorem 8]{schechtman1995local})}
		The composition $ \pi = \tau_{2} \circ \cdots \circ \tau_{n-1} \colon Y \rightarrow \PP^{n-1} $ is a blowing-up with centers in $ D $ such that $ Y $ is smooth and $ E = \pi^{-1}D $ is a normal crossing divisor on $ Y $.
	\end{lem}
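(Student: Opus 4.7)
The plan is to follow the wonderful-model construction of De Concini--Procesi, specialized to hyperplane arrangements by Schechtman--Terao--Varchenko. The key combinatorial input is that $\mathcal{L} = \bigsqcup_{k=2}^{n-1}\mathcal{L}_k$ is a \emph{building set} for the intersection lattice of $A$: for every flat $W'\subseteq(\CC^{n})^{*}$ spanned by a subset of $A$, the arrangement $A \cap W'$ decomposes uniquely into maximal indecomposable parts whose spans lie in $\mathcal{L}$ and realize $W'$ as an internal direct sum. This is essentially built into the definition of indecomposability used in the paper.

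I would prove the lemma by downward induction on $k$ from $n-1$ to $2$, carrying the following invariant at each stage $Y_k$: (i) $Y_k$ is smooth; (ii) at every point $p \in Y_k$ with image $q \in \PP^{n-1}$, the total transform of $D$ constructed so far is, in suitable analytic coordinates on a neighborhood of $p$, a union of coordinate hyperplanes corresponding to the proper transforms of hyperplanes of $A$ through $q$ and to the exceptional divisors $E_W$ for dense edges $W$ of dimension $>k$ with $q\in\PP(W^{\perp})$; (iii) the proper transform $\widetilde Z_k$ of $Z_k$ in $Y_k$ is a disjoint union of smooth subvarieties indexed by $W \in \mathcal{L}_k$, each meeting every previously created divisor transversally. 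Granting this invariant, $\tau_k$ is the blowup of a smooth center with normal crossings against the existing divisor, so smoothness and the invariant propagate to $Y_{k-1}$.

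For the local analysis at $p$, I would factor the local equation of $D$ at $q$ using the canonical decomposition of $A_q := \{L_j : L_j(q)=0\}$ into its indecomposable summands. Those summands whose spans are dense edges of dimension $>k$ correspond to centers already blown up, each contributing one exceptional coordinate in an appropriate chart; the remaining summands together with hyperplanes missing $q$ behave as an independent subarrangement whose dense edges through $q$ are exactly those of $A$ of dimension $\leq k$ through $q$. This reduces the bookkeeping at $p$ to the part of $\mathcal{L}$ still to be blown up.

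The main obstacle is the transversality assertion in (iii). For distinct $W, W' \in \mathcal{L}_k$, the linear subspaces $\PP(W^{\perp})$ and $\PP(W'^{\perp})$ meet (if at all) along $\PP((W+W')^{\perp})$. If $W+W' \in \mathcal{L}$, then $W+W'$ belongs to some $\mathcal{L}_j$ with $j<k$ and was blown up at an earlier stage $\tau_j$, separating the two proper transforms. If $W+W' \notin \mathcal{L}$, then $A \cap (W+W')$ decomposes as a nontrivial internal direct sum in $W+W'$, and a direct calculation in coordinates adapted to that product shows that $\PP(W^\perp)$ and $\PP(W'^\perp)$ already meet transversally in $\PP^{n-1}$, so the claim is automatic. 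This is the precise step where the indecomposability defining $\mathcal{L}$ is used essentially. Iterating the invariant through $k = 2$ then yields a smooth $Y = Y_1$ on which $\pi^{-1}D$ is a normal crossing divisor.
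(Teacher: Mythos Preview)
The paper gives no proof of this lemma; it is quoted directly from \cite{schechtman1995local}. Your overall strategy---induction on $k$ using the building-set property of the dense edges---is indeed the one underlying the STV/De~Concini--Procesi argument, so the plan is sound.

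There is, however, a genuine gap in your invariant (iii). It is simply false that the proper transform $\widetilde Z_k$ is a \emph{disjoint} union of smooth subvarieties. For instance, take $n=5$, $W=\langle x_1,x_2\rangle$, $W'=\langle x_3,x_4\rangle$, and an arrangement $A$ for which $W,W'\in\mathcal L_2$ while $W\oplus W'=W+W'$ is decomposable and no dense edge of dimension $>2$ contains the point $[0{:}0{:}0{:}0{:}1]$ (e.g.\ $A=\{x_1,x_2,x_1{+}x_2,x_3,x_4,x_3{+}x_4,x_5,x_1{+}x_3{+}x_5\}$). Then nothing is blown up near $[0{:}0{:}0{:}0{:}1]$ before stage $k=2$, and the two components of $\widetilde Z_2$ still meet there. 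You seem to sense this, since in the case $W+W'\notin\mathcal L$ you abandon disjointness and appeal to ``transversality'' instead---but that contradicts your own (iii), and transversality of two smooth subvarieties does not make their \emph{union} a smooth center. What actually rescues the construction is that in this situation the ideal of $\widetilde Z_k$ is locally the \emph{product} of the ideals of its components (because $(x_1,x_2)\cap(x_3,x_4)=(x_1,x_2)\cdot(x_3,x_4)$ for transversal regular sequences), so blowing up $\widetilde Z_k$ coincides with blowing up the components one at a time along smooth centers. Carrying \emph{this} through the induction---for more than two components, and compatibly with the previously created exceptional divisors---is the real content of the STV/De~Concini--Procesi proof, and it requires a sharper invariant than the one you wrote down.

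A minor slip: in the case $W+W'\in\mathcal L$ you write $j<k$. Since $\dim(W+W')>k$, one has $j>k$, which is precisely why $W+W'$ was blown up at an \emph{earlier} stage in the sequence $\tau_{n-1},\ldots,\tau_{k+1}$.
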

	
 Recall that $ \mathcal{L} = \bigcup\limits_{k=1}^{n-1} \mathcal{L}_{k} $. By definition, each subspace $ W $ in $ \mathcal{L} $ defines an irreducible component $ E_{W} $ of $ E $ and $ E = \sum\limits_{W \in \mathcal{L}} E_{W} $.

	In the sequel, we extend $ (\mathcal{V},\nabla) $ to connections on $ Y $ with a logarithmic pole along $ E $. 
	
	Denote by $ O_{Y}[E] $ the sheaf of meromorphic functions on $ X $ that are holomorphic on $ U $. For any coherent sheaf $ \F $ on $ Y $, denote by $ \F[E] = \F\otimes O_{Y}[E] $ for simplicity. Recall that the logarithmic de Rham complex of $ Y $ along $ E $ is defined to be the smallest subcomplex $ \Omega_{Y}(\log E) $ of $ \Omega_{Y}[E] $ containing $ \Omega_{Y} $ that is stable under the exterior product, and such that $ \frac{df}{f} $ is a local section of $ \Omega_{Y}[E] $ for all  local sections $ f $ of $  O_{Y}[E] $. 
	
	We first construct a specific extension of $ (\V,\nabla) $. Recall that for any hyperplane $ H $ in $ \PP^{n-1} $ and its defining polynomial $ L $, $ f_{L} = \frac{f}{L^{d}} $ defines a holomorphic function on $ \PP^{n-1} \setminus H $.  So the composition $ f_{L} \circ \pi $ is a holomorphic function on $ U_{H} = Y \setminus \pi^{-1}H $ and we can define a flat meromorphic connection on $ U_{H} $:
	\begin{equation}\label{eqn: connection on Y}
		\nabla_{0} \colon O_{Y}[E]|_{U_{H}} \longrightarrow \Omega^{1}_{Y}[E]|_{U_{H}},\ g  \mapsto dg -  g \cdot \frac{n}{d}\cdot\frac{d(f_{L} \circ \pi)}{f_{L} \circ \pi}.
	\end{equation}

	 Let $ \V_{0} = \pi^{*}O_{\PP^{n-1}}(n) $. Gluing all $ \nabla_{0} $ together yields a meromorphic connection:
	\begin{equation*}
		\nabla_{0} \colon \V_{0}[E] \longrightarrow \Omega_{Y}^{1} \otimes \V_{0}[E].
	\end{equation*}
	
	 The equation (\ref{eqn: connection on Y}) implies that $ \nabla_{0} $ sends $ \V_{0} $ to $ \Omega_{Y}^{1}(\log E) \otimes \V_{0} $. Furthermore, 	by comparing the equation (\ref{eqn: connection on Y}) with the equation ($ \ref{eqn: connection on U} $), we have the pair $ (\V_{0},\nabla_{0}) $ is exactly an extension of $ (\V,\nabla) $ on $ Y $ with a logarithmic pole along $ E $.
	 
	By twisting $ \V_{0} $, we obtain more extensions of $ (\V,\nabla) $. For any map $ \mu \colon  \mathcal{L} \rightarrow \ZZ $, consider the following locally free subsheaf of $ \V_{0}[E] $:
	\begin{equation*}
		\mathcal{V}_{\mu} = \mathcal{V}_{0} \otimes O_{Y}(-\sum\limits_{W \in \mathcal{L}}\mu(W)E_{W}).
	\end{equation*}
	
	\begin{lem}\label{lem: extension of connection}
		The pair $ (\V_{\mu},\nabla_{0}) $ is an extension of $ (\V,\nabla) $ on $ Y $ with a logarithmic pole along $ E $, whose residue along $ E_{W} $ is exactly
		\begin{equation*}
			\mu(W) - \frac{n}{d}\sum\limits_{L_{j} \in W}a_{j}.
		\end{equation*}
	\end{lem}
	
	\begin{proof}
		For any point $ y $ in $ Y $, let $ U_{y} $ be a small neighbourhood of $ y $ and choose a local coordinate $ \{z_{1},\cdots,z_{n-1}\} $ such that $ \pi^{-1}D $ is locally given by $ z_{1}\cdots z_{m} = 0 $ in $ U_{y} $. Let $ E_{W_{i}} $ be the irreducible component of $ \pi^{-1}D $ such that $ E_{W_{i}} \cap U_{y} = \{z_{i}=0\} $. Set $ b_{i} = \mu(W_{i}) $. Fix a generic hyperplane $ H $ such that $ U_{y} \cap \pi^{-1}H = \emptyset $ and let $ L $ be the defining polynomial of $ H $.  
		
		Under the identification $ \mathcal{V}_{0}|_{U_{y}} = O_{U_{y}}$, we have
		\begin{equation*}
			\mathcal{V}_{\mu}|_{U_{y}} = O_{U_{y}} \cdot z_{1}^{b_{1}}\cdots z_{m}^{b_{m}}.
		\end{equation*}
		
		And the restriction of the holomorphic function $ \frac{L_{j}}{L} \circ \pi $ on $ U_{y} $ has the form
		\begin{equation*}
			(\frac{L_{j}}{L} \circ \pi)|_{U_{y}} = g_{j}\prod\limits_{L_{j} \in W_{i}}z_{i},
		\end{equation*}
		where $ g_{j} $ is a holomorphic function on $ U_{y} $ such that $ g_{j}(y) \neq 0 $.

		Then for any holomorphic function $ g $ on some open subset of $ U_{y} \subseteq U_{H} $, by the equation (\ref{eqn: connection on Y}) we have
		\begin{eqnarray*}
			\nabla_{0}(g\cdot z_{1}^{b_{1}}\cdots z_{m}^{b_{m}}) & = & d(g\cdot z_{1}^{b_{1}}\cdots z_{m}^{b_{m}}) -  g \cdot \frac{n}{d} \cdot \frac{d(f_{L} \circ \pi)}{f_{L} \circ \pi} \cdot z_{1}^{b_{1}}\cdots z_{m}^{b_{m}} \\
			& = & \left(dg + g\cdot \sum\limits_{i=1}^{m}b_{i}\frac{dz_{i}}{z_{i}}-g\cdot \frac{n}{d}\cdot\frac{d(f_{L} \circ \pi)}{f_{L} \circ \pi}\right)\cdot z_{1}^{b_{1}}\cdots z_{m}^{b_{m}} \\
			& = & \left(dg + g\cdot \left(\sum\limits_{i=1}^{m}\left(b_{i}-\frac{n}{d}\sum\limits_{L_{j} \in W_{i}}a_{j}\right)\frac{dz_{i}}{z_{i}}- \frac{n}{d}\cdot\sum\limits_{j=1}^{r}a_{j}\frac{dg_{j}}{g_{j}}\right)\right)\cdot z_{1}^{b_{1}}\cdots z_{m}^{b_{m}}.
		\end{eqnarray*}

	Therefore, the image of $ \mathcal{V}_{\mu} $ under $ \nabla_{0} $ is contained in $ \Omega_{Y}^{1}(\log E) \otimes \mathcal{V}_{\mu} $, which implies that the pair $ (\V_{\mu},\nabla_{0}) $ is an extension of $ (\V,\nabla) $ on $ Y $ with a logarithmic pole along $ E $. Furthermore, for any $ W \in \mathcal{L} $, take $ y $ as a generic point on $ E_{W} $. In this case, we have $ m = 1 $ and $ W_{1} = W $. So the residue of $ (\mathcal{V_{\mu}},\nabla_{\mu}) $ along $ E_{W} $ is exactly 
	\begin{equation*}
		\mu(W)-\frac{n}{d}\sum\limits_{L_{j} \in W}a_{j}.
	\end{equation*}
	\end{proof}
	
	Now the following proposition is a direct corollary of  Lemma \ref{lem: extension of connection} and the algebraic de Rham theorem (see \cite[Corollary 6.10]{deligne1970equations}).
	
	\begin{prop}\label{prop: algebraic de rham}
		Let $ \mu \colon \mathcal{L} \rightarrow \ZZ $ be a map satisfying that for any $ W \in \mathcal{L} $,
		\begin{equation*}
		\mu(W)-\frac{n}{d}\sum\limits_{L_{j} \in W}a_{j} \notin \ZZ_{>0}.
		\end{equation*}
	Then for any $ p \geq 0 $,
	\begin{equation*}
		\HH^{p}(Y,\Omega_{Y}^{\cdot}(\log \pi^{-1}D)\otimes \mathcal{V_{\mu}}) \simeq H^{p}(U,\LL).
	\end{equation*}
	\end{prop}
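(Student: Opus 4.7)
The plan is to assemble the conclusion as a direct application of Deligne's comparison theorem for regular connections with logarithmic poles along a normal crossing divisor (\cite[Corollary 6.10]{deligne1970equations}). All the heavy lifting has already been done: Lemma~\ref{lem: STV-blowup} provides a log resolution, and Lemma~\ref{lem: extension of connection} constructs the regular extension together with a formula for its residue along each boundary component. What remains is to verify that the numerical hypothesis in the statement is exactly the one required to run Deligne's theorem, and then to translate the resulting hypercohomology comparison from $Y$ back down to $U$.

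First I would note that the blowing-up $\pi \colon Y \to \PP^{n-1}$ has centers contained in $D$, so $\pi$ restricts to an isomorphism $Y \setminus \pi^{-1}D \xrightarrow{\sim} U$. Under this identification, $\mathcal{V}_{\mu}|_{Y \setminus \pi^{-1}D}$ and $\nabla_{\mu}|_{Y \setminus \pi^{-1}D}$ agree with $(\mathcal{V},\nabla)$ by Lemma~\ref{lem: extension of connection}, so the horizontal sections of $(\mathcal{V}_{\mu},\nabla_{\mu})$ on $Y \setminus \pi^{-1}D$ form the local system $\LL$. By Lemma~\ref{lem: STV-blowup}, $\pi^{-1}D = \sum_{W \in \mathcal{L}} E_W$ is a normal crossing divisor on the smooth variety $Y$, and by Lemma~\ref{lem: extension of connection} the connection $\nabla_{\mu}$ has logarithmic poles along $\pi^{-1}D$ with residue along $E_W$ equal to
\begin{equation*}
    \mu(W) - \frac{n}{d}\sum_{L_j \in W} a_j.
\end{equation*}

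Next I would invoke Deligne's theorem: for a regular meromorphic connection $(\widetilde{\mathcal{V}}, \widetilde{\nabla})$ on $(Y, \pi^{-1}D)$ with logarithmic poles, whose residue along every irreducible component of $\pi^{-1}D$ has no eigenvalue in $\ZZ_{>0}$, the natural map
\begin{equation*}
    \HH^{p}\bigl(Y,\Omega_Y^{\bullet}(\log \pi^{-1}D)\otimes \widetilde{\mathcal{V}}\bigr) \longrightarrow H^{p}(Y \setminus \pi^{-1}D,\LL)
\end{equation*}
is an isomorphism. The hypothesis of the proposition on $\mu$ is precisely the condition that no residue of $\nabla_\mu$ lies in $\ZZ_{>0}$, so Deligne's theorem applies to $(\mathcal{V}_{\mu},\nabla_{\mu})$. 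Combining with the identification $Y \setminus \pi^{-1}D \simeq U$ yields the stated isomorphism.

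Since each step is a direct citation of Lemmas already proved or of \cite[Corollary 6.10]{deligne1970equations}, there is no genuine obstacle; the only point requiring any care is the bookkeeping that the rank-one residue computed in Lemma~\ref{lem: extension of connection} is literally the eigenvalue to which Deligne's non-resonance hypothesis refers, and that avoidance of $\ZZ_{>0}$ (rather than, say, $\ZZ \setminus \{0\}$) is exactly what the comparison theorem demands in this direction.
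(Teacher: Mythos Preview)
Your proposal is correct and follows exactly the paper's approach: the paper simply states that the proposition is a direct corollary of Lemma~\ref{lem: extension of connection} and Deligne's comparison theorem \cite[Corollary 6.10]{deligne1970equations}, which is precisely what you unpack. If anything, you have written out more of the bookkeeping (the identification $Y\setminus\pi^{-1}D\simeq U$ and the observation that the rank-one residue is the relevant eigenvalue) than the paper does.
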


	\section{Proof of Theorem \ref{thm: main}}\label{sec: proof 2}
	
	In this section, we will prove Theorem \ref{thm: main}. We will need the following two technical lemmas. The first one is a translation of indecomposability. Although this is implicit in \cite[Lemma 4.4-4.6]{budur2024motivic}, it is not stated explicitly there. We formulate it as follows and give a concise proof.
	\begin{lem}\label{lem: combinatoric translation for indecomposable}
		Let $ V $ be an $ n $-dimensional $ \CC $-linear space($n \geq 2$) and $ A = \{ L_{1},\cdots,L_{r} \} $ be an indecomposable subset of $ V $. Then there exist positive rational numbers $ \epsilon_{1},\cdots,\epsilon_{r} $ such that $ \sum\limits_{j=1}^{r} \epsilon_{j} = n $ and for any nonzero proper subspace $ W \subsetneq V $,
		\begin{equation*}
			\sum\limits_{L_{j} \in W} \epsilon_{j} < \dim W.
		\end{equation*}
	\end{lem}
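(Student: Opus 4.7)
The plan is to construct the weight vector $(\epsilon_{1},\dots,\epsilon_{r})$ as a rational convex combination of indicator vectors of bases of $V$ drawn from $A$. First note that indecomposability forces $A$ to span $V$: otherwise any linear complement of $\Span(A)$ would provide a nontrivial direct sum decomposition of $V$ with $A$ contained in one summand. For each subset $B\subseteq A$ that is a basis of $V$, define $\epsilon^{B}\in\QQ^{r}$ by $\epsilon^{B}_{j}=1$ if $L_{j}\in B$ and $\epsilon^{B}_{j}=0$ otherwise. Then $\sum_{j}\epsilon^{B}_{j}=n$, and $\sum_{L_{j}\in W}\epsilon^{B}_{j}=|B\cap W|\le\dim W$ for every subspace $W\subseteq V$.

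The central step is: for every proper nonzero $W\subsetneq V$, there exists a basis $B\subseteq A$ of $V$ with $|B\cap W|<\dim W$. To prove this, pick $B$ minimizing $|B\cap W|$ and suppose for contradiction that $|B\cap W|=\dim W$. Then $B\cap W$ is a basis of $W$, and $B\setminus W$ spans a linear complement $W_{2}$, so $V=W\oplus W_{2}$ with both summands nonzero. By indecomposability there exists $L_{j}\in A$ with $L_{j}\notin W\cup W_{2}$. Expanding $L_{j}$ in the basis $B$, the coefficient of at least one $L_{i_{k}}\in B\cap W$ must be nonzero (else $L_{j}\in W_{2}$), so the basis exchange $B'=(B\setminus\{L_{i_{k}}\})\cup\{L_{j}\}$ is again a basis of $V$. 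Since we remove an element of $W$ and adjoin one outside $W$, $|B'\cap W|=\dim W-1$, contradicting minimality.

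To conclude, observe that $A\cap W=A\cap\Span(A\cap W)$, so $\sum_{L_{j}\in W}\epsilon_{j}$ depends only on the subspace $F=\Span(A\cap W)\subseteq W$, while $\dim W\ge\dim F$. It therefore suffices to verify the strict inequality for the finitely many subspaces of the form $\Span(A\cap W)$. For each proper nonzero such $F$, choose via the central step a basis $B_{F}\subseteq A$ of $V$ with $|B_{F}\cap F|<\dim F$; for each $L_{j}\in A$, additionally choose some basis $B'_{j}\subseteq A$ containing $L_{j}$ (available since $A$ spans $V$ and each $L_{j}\ne 0$). Averaging with equal rational weights over this finite collection of bases produces $\epsilon\in\QQ^{r}_{>0}$ with $\sum_{j}\epsilon_{j}=n$; for each proper nonzero $W$, the strict inequality contributed by $B_{F}$ with $F=\Span(A\cap W)$ survives the averaging against the weak inequalities from the remaining bases, yielding $\sum_{L_{j}\in W}\epsilon_{j}=\sum_{L_{j}\in F}\epsilon_{j}<\dim F\le\dim W$ (the case $A\cap W=\emptyset$ being trivial).

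The main obstacle is the central step, which converts the qualitative indecomposability hypothesis into the quantitative strict inequality by combining the direct sum structure it rules out with the matroid basis exchange axiom. Once that step is in hand, the reduction to finitely many "flat" subspaces and the averaging argument are routine.
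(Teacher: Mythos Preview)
Your proof is correct and takes a genuinely different route from the paper's. The paper fixes a single basis $B=\{L_{1},\dots,L_{n}\}\subseteq A$, records for each $L_{j}$ with $j>n$ its support $B_{j}\subseteq B$, and writes down explicit weights $\epsilon_{j}=\tfrac{n}{n+1}$ for $j\le n$ and $\epsilon_{j}=\tfrac{1}{n+1}\sum_{L_{i}\in B_{j}}\tfrac{1}{b_{i}}$ for $j>n$ (where $b_{i}=\#\{j>n:L_{i}\in B_{j}\}$); the strict inequality is then checked via a two-case analysis according to whether $W\cap B$ already spans $W$. Your argument is more structural: you realise the weight vector as a rational average of indicator vectors of bases drawn from $A$, reducing everything to the claim that for each proper nonzero $W$ some basis in $A$ meets $W$ in fewer than $\dim W$ elements, which you obtain from indecomposability by a basis-exchange step. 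Your approach is cleaner conceptually and makes the underlying matroid content explicit (it really only uses connectedness of the linear matroid on $A$), at the price of a less explicit weight vector; the paper's approach is more hands-on and yields a closed formula, at the price of the case split.
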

	\begin{proof}
		Since $ A $ is indecomposable, $ L_{1},\cdots,L_{r} $ span the whole space $ V $. So without loss of generality we may assume $ B = \{L_{1},\cdots,L_{n}\} $ is a basis of $ V $. Then for any $ j > n $ we can define $ B_{j} $ to be the subset of $ B $ consisting of those $ L_{i} $ such that the coefficient of $ L_{i} $ is not zero in the linear representation of $ L_{j} $. For each $ 1 \leq i \leq n $, set $ b_{i} = \# \{j>n \mid L_{i} \in B_{j}\}$.
		
			Take
		\begin{equation*}
			\epsilon_{j} = \left\{\begin{array}{ll}
				\frac{n}{n+1}, & 1 \leq j \leq n \\[2mm]
				(\sum\limits_{L_{i} \in B_{j}}\frac{1}{b_{i}})\cdot \frac{1}{n+1}, & n+1 \leq j \leq s.		
			\end{array}
			\right.
		\end{equation*}
		Obviously $ \epsilon_{1},\cdots,\epsilon_{r} $ are positive rational numbers satisfying that
		\begin{equation*}
			\sum\limits_{j=1}^{n}\epsilon_{j} = \frac{n^{2}}{n+1} + \sum\limits_{j=n+1}^{r}(\sum\limits_{L_{i} \in B_{j}}\frac{1}{b_{i}})\cdot \frac{1}{n+1} = \frac{n^{2}}{n+1} + \sum\limits_{i=1}^{n}(\sum\limits_{\substack{j > n \\ L_{i} \in B_{j}}}\frac{1}{b_{i}})\cdot \frac{1}{n+1} = \frac{n^{2}+n}{n+1} = n.
		\end{equation*}
		
		Furthermore, for any nonzero proper subspace $ W \subsetneq V $, consider the intersection $ W \cap B $. If $ \#(W \cap B) \leq \dim W - 1 $, then we have
		\begin{eqnarray*}
			\sum\limits_{L_{j} \in W} \epsilon_{j} & \leq & (\dim W - 1) \cdot \frac{n}{n+1} + \frac{n}{n+1} \\
			& = & \dim W \cdot \frac{n}{n+1} \\
			& < & \dim W.
		\end{eqnarray*}
		
		Otherwise, we have $ W \cap B $ is a basis of $ W $ since $ B $ is linearly independent. So for any $ j > n $, $ L_{j} \in W $ if and only if $ B_{j} \subset W $. Furthermore, since $ A $ is indecomposable, there exists some $  j_{0} > n $ such that
		\begin{equation*}
			B_{j_{0}} \cap W \neq \emptyset,\ \text{and\ }B_{j_{0}} \nsubseteq W.
		\end{equation*} 
		 
		So we have
		\begin{equation*}
			\sum\limits_{\substack{j > n \\ B_{j} \subset W}}(\sum\limits_{L_{i} \in B_{j}}\frac{1}{b_{i}}) < \sum\limits_{\substack{j > n \\ B_{j} \cap W \neq \emptyset}}(\sum\limits_{L_{i} \in B_{j}}\frac{1}{b_{i}}) =  \sum\limits_{\substack{1 \leq i \leq n \\ L_{i} \in W}}(\sum\limits_{\substack{j > n \\ L_{i} \in B_{j}}} \frac{1}{b_{i}})  = \#(W \cap B) = \dim W.
		\end{equation*}
		
		Therefore
		\begin{eqnarray*}
			\sum\limits_{L_{j} \in W} \epsilon_{j} & = & \dim W \cdot \frac{n}{n+1} + 	\sum\limits_{\substack{j > n \\ B_{j} \subset W}}(\sum\limits_{L_{i} \in B_{j}}\frac{1}{b_{i}})\cdot\frac{1}{n+1} \\
			& < & \dim W \cdot \frac{n}{n+1} + \frac{\dim W}{n+1} \\
			& = & \dim W.
		\end{eqnarray*}
		
		So $ \epsilon_{1},\cdots,\epsilon_{r} $ meet the requirements.
	\end{proof}
	
	The second lemma is a useful vanishing theorem from \cite{esnault1992lectures}.
	\begin{lem}\label{lem: vanishing}\rm{(=\cite[Theorem 6.2]{esnault1992lectures})}
		Let $ Y $ be a $ n $-dimensional projective manifold over $ \CC $. Let $ E = \sum\limits_{j=1}^{r} E_{j} $ be a reduced normal crossing divisor such that $ Y \setminus E $ is affine. Let $ \mathcal{V} $ be an invertible sheaf on $ Y $ such that there exist positive integers $ c_{1},\cdots,c_{r},N $ satisfying that $ 0 < c_{j} < N $ and $ \mathcal{V}^{N} = O_{Y}(\sum\limits_{j=1}^{r}c_{j}E_{j}) $. Then for any $ p+q \neq n $,
		\begin{equation*}
			H^{p}(Y,\Omega_{Y}^{q}(\log E)\otimes \mathcal{V}^{-1}) = 0.
		\end{equation*}
	\end{lem}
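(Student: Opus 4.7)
The plan is to follow the classical cyclic covering technique going back to Esnault--Viehweg. The hypothesis $\mathcal{V}^{N}\simeq O_{Y}(\sum_{j}c_{j}E_{j})$ together with a chosen defining section of the divisor $D:=\sum_{j}c_{j}E_{j}$ canonically produces a cyclic cover of degree $N$ branched along $E$. I would first construct this cover $\pi:Z\to Y$, take its normalization, and then apply an equivariant desingularization (with respect to the $\mu_{N}$-action) to obtain a smooth projective variety $\tilde{Z}$ with a generically finite map $\tilde{\pi}:\tilde{Z}\to Y$ such that $\tilde{E}:=(\tilde{\pi}^{-1}E)_{\mathrm{red}}$ is a simple normal crossing divisor on $\tilde{Z}$.

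The core structural input is the eigensheaf decomposition, for each $q$,
\begin{equation*}
    \tilde{\pi}_{*}\Omega^{q}_{\tilde{Z}}(\log \tilde{E}) \simeq \bigoplus_{i=0}^{N-1}\Omega^{q}_{Y}(\log E)\otimes \mathcal{V}^{-i}\otimes O_{Y}\Bigl(\sum_{j}\lfloor ic_{j}/N\rfloor E_{j}\Bigr),
\end{equation*}
coming from the $\mu_{N}$-action on the pushforward. The decisive point is that the assumption $0<c_{j}<N$ forces $\lfloor c_{j}/N\rfloor=0$ for every $j$, so the $i=1$ eigensheaf summand is exactly $\Omega^{q}_{Y}(\log E)\otimes \mathcal{V}^{-1}$. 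In particular $\Omega^{q}_{Y}(\log E)\otimes \mathcal{V}^{-1}$ is a direct summand of $\tilde{\pi}_{*}\Omega^{q}_{\tilde{Z}}(\log \tilde{E})$, and its cohomology injects into $H^{p}(\tilde{Z},\Omega^{q}_{\tilde{Z}}(\log \tilde{E}))$.

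Next I would invoke open-variety vanishing on $\tilde{Z}$. Since $\tilde{\pi}$ is finite outside the exceptional locus and $Y-E$ is affine, its preimage $\tilde{Z}-\tilde{E}$ is again affine. Artin's vanishing then gives $H^{k}(\tilde{Z}-\tilde{E},\CC)=0$ for $k>n$, and Deligne's $E_{1}$-degeneration of the logarithmic Hodge-to-de Rham spectral sequence of the pair $(\tilde{Z},\tilde{E})$ converts this into $H^{p}(\tilde{Z},\Omega^{q}_{\tilde{Z}}(\log \tilde{E}))=0$ for $p+q>n$. Serre duality, using the identification $\Omega^{q}_{\tilde{Z}}(\log \tilde{E})^{\vee}\otimes \omega_{\tilde{Z}}(\tilde{E})\simeq \Omega^{n-q}_{\tilde{Z}}(\log \tilde{E})$, then yields the complementary range $p+q<n$. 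Combined with the direct summand property from the previous paragraph, this gives $H^{p}(Y,\Omega^{q}_{Y}(\log E)\otimes \mathcal{V}^{-1})=0$ for all $p+q\neq n$.

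The main obstacle is to execute the desingularization of $Z$ carefully so that the eigensheaf decomposition above retains the precise form with the rounding $\lfloor ic_{j}/N\rfloor$, and to verify that the higher direct images of logarithmic forms under the desingularization step vanish, so that the decomposition at the level of $\tilde{\pi}_{*}$ truly matches the one obtained at the normalized-cover level. Near points where several $E_{j}$ with different multiplicities $c_{j}$ meet, the cover is locally modeled on the hypersurface singularity $w^{N}=\prod_{j}z_{j}^{c_{j}}$, and one needs a detailed local computation to both identify an appropriate equivariant resolution and read off its effect on log differentials; this is precisely the content of the auxiliary technical results in \cite{esnault1992lectures} that underlie the stated vanishing.
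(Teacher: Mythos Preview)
The paper does not supply a proof of this lemma: it is quoted verbatim as \cite[Theorem 6.2]{esnault1992lectures} and used as a black box in the proof of Theorem~\ref{thm: main}. Your outline is exactly the cyclic-cover argument of Esnault--Viehweg that underlies the cited theorem, so there is nothing to compare against within the paper itself; your sketch is correct in structure and faithfully reproduces the approach of the reference.
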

	
	Now we are ready to prove the main theorem. We keep the notation from Section \ref{sec: connection}.

	\begin{proof}[Proof of Theorem \ref{thm: main}]
		
		By Lemma \ref{lem: combinatoric translation for indecomposable}, there exist positive rational numbers $ \epsilon_{1},\cdots,\epsilon_{r} $ such that $ \sum\limits_{j=1}^{r} \epsilon_{j} = n $ and for any nonzero proper subspace $ W \subsetneq (\CC^{n})^{*} $,
		\begin{equation*}
			\sum\limits_{L_{j} \in W} \epsilon_{j} < \dim W.
		\end{equation*}
		
		By disturbing $ \epsilon_{j} $, we may assume for any $ W \in \mathcal{L} $, the number $ \sum\limits_{L_{j} \in W} \epsilon_{j} $ is not an integer. Denote by
		\begin{equation*}
			\mu(W) = 1+ \lfloor  \sum\limits_{L_{j} \in W} \epsilon_{j} \rfloor.
		\end{equation*}
		By definition, $ 1 \leq \mu(W) \leq \dim W $ and $ 0 < 1+  \sum\limits_{L_{j} \in W} \epsilon_{j} - \mu(W) < 1 $.

		Let $ \pi \colon Y \rightarrow \PP^{n-1} $ be the blowing-up defined in Lemma \ref{lem: STV-blowup} and denote by $ E = \pi^{-1}D $. By Lemma \ref{lem: extension of connection}, there is an extension $ (\mathcal{V}_{\mu},\nabla_{0}) $ of $ (\V,\nabla) $ with a logarithmic pole along $ E $ such that 
		\begin{equation*}
			\mathcal{V}_{\mu} = O_{Y}(-\sum\limits_{W \in \mathcal{L}}\mu(W)E_{W})\otimes\pi^{*}O_{\PP^{n-1}}(n).
		\end{equation*}

		Furthermore, for any $ W \in \mathcal{L} $, the residue of $ (\mathcal{V}_{\mu},\nabla_{\mu}) $ along $ E_{W} $ is 
		\begin{equation*}
		\mu(W) - \frac{n}{d}\sum\limits_{L_{j} \in W}a_{j} \leq \dim W - \frac{n}{d}\sum\limits_{L_{j} \in W}a_{j}.
		\end{equation*}
		
		Since $ f $ satisfies the condition (\ref{eqn: nonresonant}), the right half side is not a positive integer. So the residue of $ (\mathcal{V}_{\mu},\nabla_{\mu}) $ along $ E_{W} $ is not a positive integer. So by Proposition \ref{prop: algebraic de rham}, we have
		\begin{equation*}
			\HH^{n-1}(Y,\Omega^{\cdot}_{Y}(\log E)\otimes\mathcal{V}_{\mu}) \simeq H^{n-1}(U,\LL).
		\end{equation*}

			Consider the hypercohomology spectral sequence
		\begin{equation*}
			E^{pq}_{1} = H^{p}(Y,\Omega^{q}_{Y}(\log E)\otimes\mathcal{V}_{\mu}) \Rightarrow \HH^{p+q}(Y,\Omega^{\cdot}_{Y}(\log E)\otimes\mathcal{V}_{\mu}).
		\end{equation*}
		
		Choose $  N \in \NN $ such that $  N\epsilon_{j}(1 \leq j \leq r) $ are all integers. Let $ D_{j} $ be the reduced divisor defined by $ L_{j} $ in $ \PP^{n-1} $. Then we have
		\begin{eqnarray*}
			\left(\pi^{*}O_{\PP^{n-1}}(n)\otimes O_{Y}(-\sum\limits_{W \in \mathcal{L}}(\mu(W)-1)E_{W})\right)^{N} &  = & O_{Y}\left(\sum\limits_{j=1}^{r}N\epsilon_{j}\pi^{*}D_{j}-\sum\limits_{W \in \mathcal{L}}N(\mu(W)-1)E_{W}\right) \\
			& = & O_{Y}\left(\sum\limits_{W \in \mathcal{L}}N\left(1+\sum\limits_{L_{j} \in W}\epsilon_{j}-\mu(W)\right)E_{W}\right).
		\end{eqnarray*}
		
		Since $ N(1+\sum\limits_{L_{j} \in W}\epsilon_{j}-\mu(W)) $ is an integer satisfying that
		\begin{equation*}
			0 <  N(1+\sum\limits_{L_{j} \in W}\epsilon_{j}-\mu(W)) < N,
		\end{equation*}
		by Lemma \ref{lem: vanishing} we have for any $ p+q \neq n-1 $,
		\begin{equation*}
			H^{p}\left(Y,\Omega_{Y}^{q}(\log \pi^{-1}D)\otimes \pi^{*}O_{\PP^{n-1}}(-n)\otimes O_{Y}(\sum\limits_{W \in \mathcal{L}}(\mu(W)-1)E_{W})\right) = 0.
		\end{equation*} 
		
		By the Serre duality, we have for any $ p+q \neq n-1 $,
		\begin{equation*}
			H^{p}(Y,\Omega^{q}_{Y}(\log E)\otimes\mathcal{V}_{\mu}) = H^{p}\left(Y,\Omega_{Y}^{q}(\log E)\otimes \pi^{*}O_{\PP^{n-1}}(n)\otimes O_{Y}(-\sum\limits_{W \in \mathcal{L}}\mu(W)E_{W})\right) = 0.
		\end{equation*}
		 
		So the edge morphism
		\begin{equation*}
			H^{0}(Y,\Omega^{n-1}_{Y}(\log E)\otimes\mathcal{V}_{\mu}) \longrightarrow \HH^{n-1}(Y,\Omega^{\cdot}_{Y}(\log E)\otimes\mathcal{V}_{\mu})
		\end{equation*}
		is injective.
		
		Since
		\begin{equation*}
			K_{Y} = \pi^{*}K_{\PP^{n-1}} + \sum\limits_{W \in \mathcal{L}}(\dim W-1)E_{W} = \pi^{*}K_{\PP^{n-1}} - E + \sum\limits_{W \in \mathcal{L}} \dim W \cdot E_{W},
		\end{equation*}
		 we have 
		\begin{eqnarray*}
			H^{0}(\PP^{n-1},\Omega_{\PP^{n-1}}^{n-1}\otimes O_{\PP^{n-1}}(n)) & = & H^{0}(Y,\pi^{*}\Omega_{\PP^{n-1}}^{n-1}\otimes \pi^{*}O_{\PP^{n-1}}(n)) \\
			& = & H^{0}\left(Y,\Omega^{n-1}_{Y}\otimes O_{Y}(E-\sum\limits_{W \in \mathcal{L}}\dim W \cdot E_{W})\otimes \pi^{*}O_{\PP^{n-1}}(n)\right) \\
			& \subseteq & H^{0}\left(Y,\Omega^{n-1}_{Y}\otimes O_{Y}(E-\sum\limits_{W \in \mathcal{L}} \mu(W) E_{W})\otimes \pi^{*}O_{\PP^{n-1}}(n)\right) \\
			& = & H^{0}(Y,\Omega^{n-1}_{Y}(\log E)\otimes \mathcal{V}_{\mu}). 
		\end{eqnarray*}
		
		So
		\begin{equation*}
			\omega_{0} \in H^{0}(\PP^{n-1},\Omega_{\PP^{n-1}}^{n-1}\otimes O_{\PP^{n-1}}(n)) \subseteq H^{0}(Y,\Omega^{n-1}_{Y}(\log E)\otimes \mathcal{V}_{\mu})
		\end{equation*} 
		is a nonzero section, which implies that $ [\omega_{0}] \neq 0 $ in $  \HH^{n-1}(Y,\Omega^{\cdot}_{Y}(\log\pi^{-1}D)\otimes\mathcal{V}_{\mu}) \simeq H^{n-1}(U,\LL) $. As we discussed in Section \ref{sec: preliminaries}, this implies that $ \omega_{0} $ defines a nonzero cohomology class in $ H^{n-1}(F,\CC) $, where $ F  f^{-1}(1) $ is the Milnor fiber of $ f $ at $ 0 $. So by Lemma \ref{lem: cohomological sufficient condition}, the $ \frac{n}{d} $-conjecture holds for $ f $.
	\end{proof}

	\bibliography{reference}
\end{document}